\newtheorem{theorem}{Theorem}
\newtheorem{corollary}[theorem]{Corollary}
\theoremstyle{definition}
\newcommand{\bN}{\mathbb{N}}
\newcommand{\bZ}{\mathbb{Z}}
\newcommand{\bR}{\mathbb{R}}
\newcommand{\bC}{\mathbb{C}}
\newcommand{\ii}{\textnormal{i}}
\DeclareMathOperator{\dz}{d\textit{z} \hspace{1pt}}
\begin{document}

\title{On a denseness result for quasi-infinitely divisible distributions \thanks{
2020 {\sl Mathematics subject classification.} Primary 60E07, 60E10; Secondary 60E05. \newline
{\it Key words and phrases.} denseness,  quasi-infinitely divisibility, zeros of characteristic functions. \newline
This research was supported by DFG
grant LI-1026/6-1. Financial support is gratefully acknowledged.}}
\author{ Merve Kutlu \thanks{Ulm University, Institute of Mathematical Finance, D-89081 Ulm, Germany;
email: merve.kutlu@uni-ulm.de}}
\date{\today}
\maketitle


\begin{abstract}
A probability distribution $\mu$ on $\bR^d$ is quasi-infinitely divisible if its characteristic function has the representation $\widehat{\mu} = \widehat{\mu_1}/\widehat{\mu_2}$ with infinitely divisible distributions $\mu_1$ and $\mu_2$.
In \cite[Thm. 4.1]{lindner2018} it was shown that the class of quasi-infinitely divisible distributions on $\bR$ is dense in the class of distributions on $\bR$ with respect to weak convergence. 
In this paper, we show that the class of quasi-infinitely divisible distributions on $\bR^d$ is not dense in the class of distributions on $\bR^d$ with respect to weak convergence if $d \geq 2$.
\end{abstract}


Recently, there has been an increased interest in quasi-infinitely divisible distributions, see \cite{alexeev2021, berger2020, berger2021, khartov2019, lindner2018, nakamura2013, passeggeri2019, passeggeri2020}.
By definition, a distribution $\mu$ on $\bR^d$ is quasi-infinitely divisible if its characteristic function can be written as $\widehat{\mu} = \widehat{\mu_1}/\widehat{\mu_2}$ with infinitely divisible distributions $\mu_1$ and $\mu_2$.
The first systematic study of quasi-infinitely divisible distributions was made by Lindner et al. \cite{lindner2018} in one dimension, where it was shown among other things, that the class of quasi-infinitely divisible distributions on $\bR$ is dense in the class of all distributions on $\bR$ with respect to weak convergence. 
Passeggeri \cite{passeggeri2020} stated the conjecture that for general $d \in \bN$, the class of quasi-infinitely divisible distributions on $\bR^d$ is dense in the class of distributions on $\bR^d$ with respect to weak convergence, which was later formulated as an open question by Berger et al. \cite[Open Question 5.4]{berger2021}. 
By L\'evy's continuity theorem, this denseness is equivalent to the fact that the characteristic function of every distribution on $\bR^d$ can be written as the local uniform limit of characteristic functions of quasi-infinitely divisible distributions. 
In \cite[p. 22]{berger2021} it was also mentioned that it was shown by Pears in \cite[Prop. 3.3.2]{pears1975} that the set of all zero-free continuous complex-valued functions on $[0, 1]^d$ is dense in the set of all continuous complex-valued functions on $[0, 1]^d$ with respect to uniform convergence if and only if $d = 1$, which gives an indication that the denseness result conjectured by Passeggeri might not hold for $d > 1$. 
The present paper provides an answer to this question and shows that the class of quasi-infinitely divisible distributions on $\bR^d$ is dense in the class of distributions on $\bR^d$ if and only if $d=1$.
In particular, we give an example of a distribution on $\bR^d$, whose characteristic function can not be approximated arbitrarily well by continuous complex-valued functions with respect to local uniform convergence, and hence especially not by characteristic functions of quasi-infinitely divisible distributions, since those are zero-free.

Consider the distribution $\mu \coloneqq \frac{1}{3} \delta_{(0,1)} + \frac{1}{3} \delta_{(1,0)} + \frac{1}{3} \delta_{(1,1)}$ on $\bR^2$, where $\delta_a$ denotes the Dirac measure at a point $a \in \bR^2$. 
Its characteristic function $\varphi \coloneqq \widehat{\mu}: \bR^2 \to \bC$, $t \mapsto \int_{\bR^2} e^{\ii \langle t, z \rangle }\mu(\dz)$ is given by 
\begin{align} \label{phi}
	\varphi(x,y) 
	&= \frac{1}{3} \left( 
	e^{\ii x} + e^{\ii y} + e^{\ii (x+y)} \right), 
	\quad (x,y) \in \bR^2.
\end{align}
In Theorem \ref{bsp}, we show that $\varphi$ can not be approximated arbitrarily well by continuous functions without zeros with respect to uniform convergence on $[-\pi, \pi]^2$, where we present two different proofs. 
The first proof is a quite elementary topological proof, while the second proof constitutes an application of the Poincar\'e-Miranda theorem, which is equivalent to Brouwer's fixed-point theorem and can be seen as an extension of the mean value theorem to functions in several variables. 
For a proof of the Poincar\'e-Miranda theorem, see \cite[p. 457]{kulpa1997}.
For the first proof, we need the following topological terminology. 

Given a set $S \subset \bR^2$, a \emph{component} of $S$ is a maximal connected subset of $S$. 
We say that two points $x, y \in S$ are \emph{connected} in $S$ if a connected subset of $S$ contains $x$ and $y$.
This is the case, if and only if a component of $S$ contains $x$ and $y$, see \cite[p. 77]{newman1951}.
The set $S$ \emph{separates} two points $x, y \in \bR^2$ if $x$ and $y$ are not connected in $\bR^2 \setminus S$.  
A closed set $E \subset \bR^2$ is said to \emph{separate} the non-empty closed sets $A, B \subset \bR^2$ if there exist two disjoint, open sets $U$ and $V$ such that $A \subset U$, $B \subset V$ and $\bR^2 \setminus E = U \cup V$. 
Note that if $E$ separates the singletons $\{x\}$ and $\{y\}$ and $U,V \subset \bR^2$ are disjoint open sets such that $x \in U$, $y \in V$ and $\bR^2 \setminus E = U \cup V$, then every subset $F \subset \bR^2 \setminus E$ such that $x,y \in F$ can be written as the disjoint union of the non-empty sets $F \cap U$ and $F \cap V$ which are relatively open in $F$, so that $F$ is not connected.
Hence, in this case, $E$ also separates $x$ and $y$.

\begin{theorem} \label{bsp}
The characteristic function $\varphi$, given in \eqref{phi}, can not be approximated arbitrarily well by zero-free continuous functions from $\bR^2$ to $\bC$ with respect to local uniform convergence. 
\end{theorem}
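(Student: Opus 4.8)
The plan is to show that $\varphi$ vanishes at isolated points with \emph{nonzero topological index}, a feature that survives small perturbations yet is impossible for a zero-free function. Identify $\bR^2$ with $\bC$. First I would locate the zeros of $\varphi$ inside $[-\pi,\pi]^2$: putting $u=e^{\ii x}$, $v=e^{\ii y}$, the equation $u+v+uv=0$ gives $u=-v/(1+v)$ (note $v\ne-1$), and $|u|=1$ forces $|v|=|1+v|$, i.e. $\mathrm{Re}\,v=-1/2$; one then reads off exactly the two zeros $z_0\coloneqq(2\pi/3,-2\pi/3)$ and $(-2\pi/3,2\pi/3)$. Fix $z_0$ and choose $r>0$ so small that the closed disk $\overline D\coloneqq\overline D(z_0,r)\subset(-\pi,\pi)^2$ contains no other zero of $\varphi$; write $C\coloneqq\partial D$ and $m\coloneqq\min_{C}|\varphi|>0$.

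Next I would compute the winding number $w\coloneqq n(\varphi|_C,0)$ of the loop $\varphi|_C$ about the origin and check $w\ne 0$. Expanding around $z_0$ with $x=2\pi/3+s$, $y=-2\pi/3+t$ and using $e^{2\pi\ii/3}+e^{-2\pi\ii/3}+1=0$ and $e^{\pm2\pi\ii/3}+1=e^{\pm\ii\pi/3}$ gives
\begin{equation*}
3\varphi\bigl(z_0+(s,t)\bigr)=\ii\bigl(s\,e^{\ii\pi/3}+t\,e^{-\ii\pi/3}\bigr)+o\bigl(|(s,t)|\bigr),
\end{equation*}
and the $\bR$-linear part $(s,t)\mapsto s\,e^{\ii\pi/3}+t\,e^{-\ii\pi/3}$ has matrix $\left(\begin{smallmatrix}1/2&1/2\\\sqrt3/2&-\sqrt3/2\end{smallmatrix}\right)$ with determinant $-\sqrt3/2<0$, so it is an orientation-reversing isomorphism. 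For $r$ small enough the straight-line homotopy from $\varphi|_C$ to this linear map (precomposed with multiplication by $\ii$, which does not change the index) stays away from $0$, whence $w=-1\ne0$.

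Then I would argue by contradiction: suppose $(f_k)$ are zero-free continuous functions $\bR^2\to\bC$ with $f_k\to\varphi$ locally uniformly, and pick $k$ with $\sup_{C}|f_k-\varphi|<m$. On the one hand, the convex homotopy $(z,\lambda)\mapsto(1-\lambda)\varphi(z)+\lambda f_k(z)$ has modulus at least $m-\sup_C|f_k-\varphi|>0$ on $C$, so $n(f_k|_C,0)=w\ne 0$. On the other hand, since $f_k$ is continuous and zero-free on all of $\overline D$, the map $(\theta,\lambda)\mapsto f_k\bigl(z_0+\lambda r e^{\ii\theta}\bigr)$, $\lambda\in[0,1]$, is a homotopy in $\bC\setminus\{0\}$ from $f_k|_C$ to the constant loop $f_k(z_0)$, forcing $n(f_k|_C,0)=0$. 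This contradiction shows $\varphi$ is not a local uniform limit of zero-free continuous functions; since characteristic functions of quasi-infinitely divisible distributions are zero-free, in particular $\varphi$ is not a local uniform limit of them.

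The main obstacle is the index computation of the second paragraph: one must either invoke the standard fact from degree theory that the local index at an isolated zero with invertible derivative equals the sign of the Jacobian determinant, or evaluate $n\bigl(\theta\mapsto\varphi(z_0+re^{\ii\theta}),0\bigr)$ explicitly for one small $r$; in either case one also needs the homotopy invariance and the nullity-on-null-homotopic-loops properties of the winding number for merely continuous loops. The same obstruction can be packaged using the separation vocabulary introduced above—$\varphi(C)$ separates $0$ from the point at infinity, and this separation persists under perturbations of size below $m$—or obtained from the Poincar\'e--Miranda theorem applied to $\mathrm{Re}\,\varphi$ and $\mathrm{Im}\,\varphi$ on a small axis-aligned square centred at $z_0$.
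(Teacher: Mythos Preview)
Your argument is correct and takes a genuinely different route from the paper. You work locally: you locate the isolated zero $z_0=(2\pi/3,-2\pi/3)$, compute the Jacobian of the linearisation of $\varphi$ there, conclude that the winding number of $\varphi$ along a small circle $C$ about $z_0$ is $-1$, and then invoke homotopy invariance of the winding number to force any uniformly close $f_k$ to have the same nonzero index on $C$, which is impossible if $f_k$ is zero-free on the enclosed disk. The paper instead gives two global arguments on a large region: the first uses plane-topology separation theorems (Newman) applied to the level sets $\{\Re\varphi\gtrless\pm1/10\}$, $\{\Im\varphi\gtrless\pm1/10\}$ to show that the zero set of $\Re g$ must contain a component meeting both $\{\Im g>0\}$ and $\{\Im g<0\}$; the second applies Poincar\'e--Miranda directly to $\Re(g\circ\psi)$ and $\Im(g\circ\psi)$ on an explicit affine image of $[0,1]^2$, after checking the boundary sign conditions by hand.

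What your approach buys is conceptual transparency and portability: any characteristic function with an isolated zero at which the real Jacobian is nonsingular will fail to be a local uniform limit of zero-free continuous functions, and no bespoke region or sign estimates are needed. The cost is that you must import the standard degree-theoretic facts (local index equals $\mathrm{sgn}\det D\varphi$ at a nondegenerate zero; homotopy invariance; vanishing on null-homotopic loops), which you flag in your final paragraph. The paper's second proof, by contrast, is entirely self-contained modulo Poincar\'e--Miranda and a few explicit trigonometric estimates, and its first proof avoids degree theory altogether in favour of Newman's separation theorems; both are more elementary in their prerequisites but less immediately reusable for other examples. Your closing remark that the obstruction can also be packaged via Poincar\'e--Miranda on a small square about $z_0$ is exactly the spirit of the paper's second proof, though the paper chooses a specific quadrilateral and verifies the sign conditions numerically rather than via the linearisation.
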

{\it First proof.} 
Consider the sets 
\begin{align*}
&A_1 \coloneqq \left\{ 
	(x,y) \in [-\pi, \pi]^2: \Re \varphi(x,y) \geq \frac{1}{10}
\right\}, 
&& A_2 \coloneqq \left\{ 
	(x,y) \in [-\pi, \pi]^2: \Im \varphi(x,y) \geq \frac{1}{10}
\right\}, \\
&B_1 \coloneqq \left\{ 
	(x,y) \in [-\pi, \pi]^2: \Re \varphi(x,y) \leq -\frac{1}{10}
\right\},
&&B_2 \coloneqq \left\{ 
	(x,y) \in [-\pi, \pi]^2: \Im \varphi(x,y) \leq -\frac{1}{10}
\right\}.
\end{align*}
Obviously, $A_j$ and $B_j$ are closed and disjoint for $j \in \{1,2\}$, and $(0,0) \in A_1$, $(\pi, -\pi) \in B_1$, $(\pi/2,\pi/2) \in A_2$ and $(-\pi/2,-\pi/2) \in B_2$, so that they are also non-empty. 

Let $g: \bR^2 \to \bC$ be a continuous function such that $|g(x,y)-f(x,y)| < \frac{1}{20}$ for all $(x,y) \in [-\pi, \pi]^2$ and assume that $g$ has no zeros. 
Define the sets 
\begin{align*} 
&U \coloneqq \left\{
	(x,y) \in [-\pi, \pi]^2: \Re g(x,y) > 0
\right\}, \quad\\
&V \coloneqq \left\{
	(x,y) \in [-\pi, \pi]^2: \Re g(x,y) < 0
\right\} \cup (\bR^2 \setminus [-\pi, \pi]^2), \\
&E \coloneqq \left\{
	(x,y) \in [-\pi, \pi]^2: \Re g(x,y) = 0
\right\}. 
\end{align*}
It can  be seen that $U$ and $V$ are disjoint open subsets of $\bR^2$, since $\partial (\bR^2 \setminus [-\pi, \pi]^2) \subset B_1 \subset V$. 
Indeed, we have
\begin{align*}
\varphi(\pi,y) 
= \frac{1}{3} \left( 
	e^{\ii \pi} + e^{\ii y} + e^{\ii \pi} e^{\ii y} 
\right)
= -\frac{1}{3} < -\frac{1}{10}
\end{align*}
for $y \in [-\pi, \pi]$. 
Similarly, it holds $\varphi(-\pi, y) = \varphi(x, \pm \pi) = -\frac{1}{3}$ for all $x,y \in [-\pi, \pi]$.
Moreover, $E$ is a compact set such that $\bR^2 \setminus E = U \cup V$. 
Since further $ A_1 \subset U$ and $B_1 \subset V$, $E$ separates $A_1$ and $B_1$. 
It is clear that $E \cap A_2 $ and $E \cap B_2$ are closed sets. 
Suppose that there exists a component $F$ of $E$ such that $F \cap A_2 \neq \emptyset$ and $F \cap B_2 \neq \emptyset$. 
If $(x_1, y_1) \in F \cap A_2$ and $(x_2, y_2) \in F \cap B_2$, then $\Im g(x_1, y_1) > 0$ and $\Im g(x_2, y_2) < 0$. 
Since $F$ is connected and $\Im g$ is continuous, also $\Im g(F)$ is connected, so that there exists $(x_0, y_0) \in F$ with $\Im g(x_0, y_0) = 0$, and hence $g(x_0, y_0) = 0$, a contradiction.
Hence, there exists no component of $E$ which intersects both $E \cap A_2$ and $E \cap B_2$. 
By \cite[Thm. 5.6]{newman1951} there exist two disjoint, closed sets $E_1$ and $E_2$ such that $E = E_1 \cup E_2$ as well as $E_1 \cap B_2 = \emptyset$ and $E_2 \cap A_2 = \emptyset$. 

Now consider the points $(0,0) \in A_1$ and $(\pi, -\pi) \in B_1$, which are separated by $E$. 
By \cite[Thm. 14.3]{newman1951}, there exists one component $E_0$ of $E$ which separates $(0,0)$ and $(\pi, -\pi)$.
Then either $E_0 \subset E_1$ or $E_0 \subset E_2$.
If $E_0 \subset E_1$, then especially, $E_0 \subset [-\pi, \pi]^2 \setminus (A_1 \cup B_1 \cup B_2)$. 
In this case we consider the path $\gamma: [0,1] \to \bR^2$ given by 
\begin{align*}
\gamma(t) = \begin{cases}
(0, -2\pi t) \quad & \textrm{if } t \in [0, 1/2),\\
(2\pi t - \pi , -\pi) \quad & \textrm{if } t \in [1/2, 1].\\
\end{cases}
\end{align*}

The path $\gamma$ connects $(0,0)$ and $(\pi, -\pi)$, and $\gamma(t) \in A_1 \cup B_1 \cup B_2$ for all $t \in [0,1]$. 
Indeed, for $t \in [0, 1/4]$ we estimate 
\begin{align*}
\Re \varphi(\gamma(t))
= 2/3 \cos(2\pi t) + 1/3 
\geq 1/3 > 1/10,
\end{align*}
so $\gamma(t) \in A_1$, and for $t \in [1/4, 3/8]$ it holds 
\begin{align*}
\Im \varphi(\gamma(t))
= -2/3 \sin(2\pi t) 
\leq -1/3 < -1/10,
\end{align*}
and hence $\gamma(t) \in B_2$. 
If $t \in [3/8, 1/2]$, then we have 
\begin{align*}
\Re \varphi(\gamma(t))
= 2/3 \cos(2\pi t) + 1/3 
\leq 2/3(-1/\sqrt{2})+1/3 < - 1/10,
\end{align*}
so that $\gamma(t) \in B_1$ and for $t \in [1/2, 1]$,
\begin{align*}
 \varphi(\gamma(t))
&= -1/3 < -1/10,
\end{align*}
as seen above, showing that $\gamma(t) \in B_1$.
Therefore, $\gamma$ does not intersect $[-\pi, \pi]^2 \setminus (A_1 \cup B_1 \cup B_2)$, and hence also not $E_0$.
Hence, $\Gamma \coloneqq \gamma([0,1])$ is a connected subset of $ \bR^2 \setminus E_0$ which contains $(0,0)$ and $(\pi, -\pi)$, showing that  $(0,0)$ and $(\pi, -\pi)$ are connected in $\bR^2 \setminus E_0$. 
But this contradicts the fact that $E_0$ separates $(0,0)$ and $(\pi, -\pi)$.

If $E_0 \subset E_2$, then $E_0 \subset [-\pi, \pi]^2 \setminus (A_1 \cup B_1 \cup A_2)$ and a similar contradiction can be obtained by considering the path 
\begin{align*}
\gamma: [0,1] \to \bR^2, \quad
\gamma(t) = \begin{cases}
(2\pi t, 0) \quad & \textrm{if } t \in [0, 1/2),\\
(\pi , \pi - 2\pi t ) \quad & \textrm{if } t \in [1/2, 1].\\
\end{cases}
\end{align*}

{\it Second proof. }
Consider the affine transformation $\psi: [0,1]^2 \to \bR^2$ of the unit square, defined by 
\begin{align*}
\psi(x,y) 
= \left(\frac{5\pi}{8}, -\frac{7\pi}{8}\right)
	+ x \left(\frac{\pi}{4}, \frac{\pi}{4}\right)
	+ y \left(-\frac{\pi}{2}, \frac{\pi}{2}\right),
\quad (x,y) \in [0,1]^2.
\end{align*}
Then
\begin{align*}
&\Re \varphi(\psi(x,0))
= \frac{1}{3} \cos\left(\frac{\pi}{8}(5+2x)\right)
	+ \frac{1}{3} \cos\left(\frac{\pi}{8}(-7+2x)\right)
	+ \frac{1}{3} \cos\left(\frac{\pi}{8}(-2+4x)\right),\\
&\Re \varphi(\psi(x,1))
= \frac{1}{3} \cos\left(\frac{\pi}{8}(1+2x)\right)
	+ \frac{1}{3} \cos\left(\frac{\pi}{8}(-3+2x)\right)
	+ \frac{1}{3} \cos\left(\frac{\pi}{8}(-2+4x)\right),\\
&\Im \varphi(\psi(0,y))
= \frac{1}{3} \sin\left(\frac{\pi}{8}(5-4y)\right)
	+ \frac{1}{3} \sin\left(\frac{\pi}{8}(-7+4y)\right)
	-\frac{1}{3\sqrt{2}}
\intertext{and}
&\Im \varphi(\psi(1,y))
= \frac{1}{3} \sin\left(\frac{\pi}{8}(7-4y)\right)
	+ \frac{1}{3} \sin\left(\frac{\pi}{8}(-5+4y)\right)
	+ \frac{1}{3\sqrt{2}}
\end{align*}
for $x, y \in [0,1]$. An easy curve discussion shows that
\begin{align*}
\Re \varphi(\psi(x,y))
\begin{cases}
	< -0.05  \quad& \textrm{if } y = 0\\
	> 0.05 \quad& \textrm{if } y = 1
\end{cases}
\quad \textrm{and} \quad
\Im \varphi(\psi(x,y))
\begin{cases}
	< -0.05  \quad& \textrm{if }x = 0\\
	> 0.05 \quad& \textrm{if } x = 1.
\end{cases}
\end{align*}
Suppose that $g: \bR^2 \to \bC$ is a continuous function such that $|g(x,y)-\varphi(x,y)| < 0.025$ for all $(x,y) \in \psi([0,1]^2)$. 
Then especially $|\Re g(x,y)-\Re \varphi(x,y)|, |\Im g(x,y)-\Im \varphi(x,y)| < 0.025$ for  $(x,y) \in \psi([0,1]^2)$, and hence
\begin{align*}
\Re g(\psi(x,y))
\begin{cases}
	< -0.025  \quad& \textrm{if } y = 0\\
	> 0.025 \quad& \textrm{if } y = 1
\end{cases}
\quad \textrm{and} \quad
\Im g(\psi(x,y))
\begin{cases}
	< -0.025  \quad& \textrm{if }x = 0\\
	> 0.025 \quad& \textrm{if } x = 1.
\end{cases}
\end{align*}
By the Poincar\'e-Miranda Theorem, it follows that $g \circ \psi$ has a zero in $[0,1]^2$, which corresponds to a zero of $g$ in $\bR^2$.
\qed

\pagebreak
\begin{corollary}
For $d \in \bN$ the following statements are equivalent. 
\begin{enumerate}
\item The class of quasi-infinitely divisible distributions on $\bR^d$ is dense in the class of all distributions on $\bR^d$ with respect to weak convergence. 
\item Every characteristic function on $\bR^d$ can be written as the local uniform limit of zero-free continuous complex-valued functions. 
\item $d = 1$. 
\end{enumerate}
\end{corollary}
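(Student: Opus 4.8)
The plan is to close the cycle of implications $(i) \Rightarrow (ii) \Rightarrow (iii) \Rightarrow (i)$, which then makes all three statements equivalent. One of these, namely $(iii) \Rightarrow (i)$, requires no new work at all: it is precisely the one-dimensional denseness result \cite[Thm. 4.1]{lindner2018}. So the task reduces to proving $(i) \Rightarrow (ii)$ and $(ii) \Rightarrow (iii)$.

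For $(i) \Rightarrow (ii)$ I would argue as follows. Given an arbitrary distribution $\nu$ on $\bR^d$, statement $(i)$ provides quasi-infinitely divisible distributions $\mu_n$ with $\mu_n \to \nu$ weakly, and L\'evy's continuity theorem upgrades this to local uniform convergence $\widehat{\mu_n} \to \widehat{\nu}$. It then only remains to note that each $\widehat{\mu_n}$ is a zero-free continuous function: continuity is automatic, and zero-freeness follows because in a representation $\widehat{\mu_n} = \widehat{\mu_{n,1}}/\widehat{\mu_{n,2}}$ both $\widehat{\mu_{n,1}}$ and $\widehat{\mu_{n,2}}$ are characteristic functions of infinitely divisible distributions and hence vanish nowhere, so the same holds for their quotient $\widehat{\mu_n}$. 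Thus $\widehat{\nu}$ is a local uniform limit of zero-free continuous functions, which is $(ii)$.

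For $(ii) \Rightarrow (iii)$ I would prove the contrapositive by showing that $(ii)$ fails whenever $d \geq 2$. The case $d = 2$ is exactly Theorem \ref{bsp}, since $\varphi$ is the characteristic function of a distribution on $\bR^2$. For $d > 2$ I would reduce to the planar case by tensoring with point masses: set $\widetilde{\mu} \coloneqq \mu \otimes \delta_0^{\otimes(d-2)}$, where $\mu$ is the distribution of Theorem \ref{bsp}, so that $\widehat{\widetilde{\mu}}(t_1,\dots,t_d) = \varphi(t_1,t_2)$ with $\varphi$ as in \eqref{phi}. If $\widehat{\widetilde{\mu}}$ were a local uniform limit of zero-free continuous functions $g_n \colon \bR^d \to \bC$, then the restrictions $h_n(x,y) \coloneqq g_n(x,y,0,\dots,0)$ would be zero-free, continuous, and would converge to $\varphi$ uniformly on every compact subset of $\bR^2$, contradicting Theorem \ref{bsp}. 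Hence $(ii)$ forces $d = 1$. The only step that is not a routine invocation of standard facts is this last reduction, and even there the work is minimal, since restriction to an affine subspace manifestly preserves continuity, absence of zeros, and local uniform convergence; I therefore do not expect a serious obstacle, the real content of the corollary being entirely carried by Theorem \ref{bsp} together with \cite[Thm. 4.1]{lindner2018}.
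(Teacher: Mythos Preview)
Your proposal is correct and follows essentially the same argument as the paper: the same cycle $(i)\Rightarrow(ii)\Rightarrow(iii)\Rightarrow(i)$, the same appeal to L\'evy's continuity theorem and zero-freeness of quasi-infinitely divisible characteristic functions for $(i)\Rightarrow(ii)$, the same tensoring $\mu\otimes\delta_0^{\otimes(d-2)}$ and restriction to the first two coordinates to reduce $d>2$ to Theorem~\ref{bsp}, and the same citation of \cite[Thm.~4.1]{lindner2018} for $(iii)\Rightarrow(i)$.
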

\begin{proof}
That (i) implies (ii) is due to L\'evy's continuity theorem and the fact that the characteristic function of a quasi-infinitely divisible distribution can not have zeros. 

We have already seen in Theorem \ref{bsp} that the characteristic function of the distribution $\mu \coloneqq \frac{1}{3} \delta_{(0,1)} + \frac{1}{3} \delta_{(1,0)} + \frac{1}{3} \delta_{(1,1)}$ on $\bR^2$ can not be approximated arbitrarily well by continuous functions without zeros with respect to local uniform convergence. 
Now suppose that $d>2$ and consider the distribution 
\begin{align*}
\sigma \coloneqq \mu \otimes \delta_0^{\otimes (d-2)}
\end{align*}
on $\bR^d$.
The characteristic function of $\sigma$ is given by 
\begin{align*}
\widehat{\sigma}(x_1, \ldots, x_d)
= \widehat{\mu}(x_1, x_2),
\quad (x_1, \ldots, x_d) \in \bR^d.
\end{align*}
Assume that there exists a sequence $(f_n)_{n \in \bN}$ of zero-free, continuous complex-valued functions on $\bR^d$ such that $f_n \to \widehat{\sigma}$ as $n \to \infty$ locally uniformly on $\bR^d$. 
For $n \in \bN$ let $g_n: \bR^2 \to \bC, (x_1, x_2) \mapsto f(x_1, x_2, 0, \ldots, 0)$.
Then for every compact set $K_0 \subset \bR^2$ we have that $f_n \to \widehat{\sigma}$ uniformly on $K_0 \times \{0\}^{d-2}$ as $n \to \infty$, and hence $g_n \to \widehat{\sigma}$ uniformly on $K_0$ as $n \to \infty$.
But this contradicts the fact that $\widehat{\mu}$ can not be approximated arbitrarily well with respect to local uniform convergence. 
Hence, (iii) follows from (ii).

Finally, Lindner et al. showed in \cite[Thm. 4.1]{lindner2018} that the class of quasi-infinitely divisible distributions on $\bR$ is dense in the class of distributions on $\bR$, so that (iii) implies (i).
\end{proof}

\subsection*{Acknowledgements}
The author would like to thank David Berger, Anna Dall'Acqua and Alexander Lindner for many fruitful discussions and for pointing out several references.

\end{document}